\newtheorem {theorem}{Theorem}[section]
\newtheorem {prop}[theorem]{Proposition}
\newtheorem {cory}[theorem]{Corollary}
\newtheorem {lemma}[theorem]{Lemma}
\theoremstyle{definition}
\numberwithin{equation}{section}
\newcommand{\mpf}{M}
\newcommand{\vmean}{\mathfrak{M}}
\newcommand{\cd}{{\mathbb{C}}^d}
\newcommand{\de}{\delta}
\newcommand{\sid}{\sigma_d}
\newcommand{\za}{\zeta}
\newcommand{\we}{w}
\newcommand{\wgt}{u}
\newcommand{\hol}{\mathcal{H}ol}
\newcommand{\Dbb}{\mathbb D}
\newcommand{\Tbb}{\mathbb T}
\newcommand{\spd}{\partial B_d}
\newcommand{\bd}{B_d}
\newcommand{\Nbb}{\mathbb N}
\newcommand{\vlm}{\nu_d}
\begin{document}

\title[Integral means of holomorphic functions]{Integral means of holomorphic functions as generic log-convex weights}



\author{Evgueni Doubtsov}


\address{St.~Petersburg Department
of V.A.~Steklov Mathematical Institute, Fontanka 27,
St.~Petersburg 191023, Russia}

\address{Department of Mathematics and Mechanics,
St.~Petersburg State University,
Universitetski pr.~28, St.~Petersburg 198504,
Russia}

\email{dubtsov@pdmi.ras.ru}

\thanks{The author was supported by the Russian Science Foundation (grant No. 14-41-00010).}

\begin{abstract}
Let $\mathcal{H}ol(B_d)$ denote the space of holomorphic functions
on the unit ball $B_d$ of $\mathbb{C}^d$, $d\ge 1$.
Given a log-convex strictly positive weight $w(r)$ on $[0,1)$,
we construct a function $f\in\mathcal{H}ol(B_d)$ such that
the standard integral means $M_p(f, r)$ and $\we(r)$ are equivalent for any $0<p\le\infty$.
Also, we obtain similar results related to volume integral means.
\end{abstract}


\maketitle

\section{Introduction}\label{s_int}

Let $\hol(\bd)$ denote the space of holomorphic functions on the
unit ball $\bd$ of $\cd$, $d\ge 1$.
For $0< p < \infty$ and $f\in\hol(\bd)$, the standard integral means $\mpf_p (f, r)$
are defined as
\[
\mpf_p (f, r) = \left( \int_{\spd} |f(r\za)|^p \, d\sid(\za)\right)^\frac{1}{p}, \quad 0\le r <1,
\]
where $\sid$ denotes the normalized Lebesgue measure on the unit sphere $\spd$.
For $p=\infty$, put
\[
M_\infty (f, r) = \sup\{|f(z)|: |z|=r\}, \quad 0\le r <1.
\]

A function $\we: [0,1) \to (0, +\infty)$ is called a weight if
$\we$ is continuous and non-decreasing.
A weight $\we$ is said to be \textsl{log-convex} if
$\log\we(r)$ is a convex function of $\log r$, $0<r<1$.
It is known that $\mpf_p(f, r)$, $0\le r<1$, is a log-convex weight
for any $f\in \hol(\bd)$, $f(0)\neq 0$, $d\ge 1$, $0<p \le \infty$.
In fact, for $d=1$, this result constitutes the classical Hardy convexity theorem (see \cite{H14}).
The corresponding proofs are extendable to all dimensions $d$, $d\ge 2$
(see, for example \cite[Lemma~1]{XZ11}).

In the present paper, for each $0<p\le \infty$, we show that
the functions $\mpf_p(f, r)$, $f\in \hol(\bd)$, $f(0)\neq 0$, are generic log-convex weights in the sense
of the following equivalence:

Let $u, v: X \to (0, +\infty)$.
We say that $u$ and $v$ are equivalent ($u\asymp v$, in brief)
if there exist constants $C_1, C_2>0$ such that
\[
C_1 u(x) \le v(x) \le C_2 u(x), \quad x\in X.
\]

\begin{theorem}\label{t_lp_gen}
Let $d\ge 1$ and let $\we: [0,1)\to (0, +\infty)$ be a log-convex weight.
There exists $f\in \hol(\bd)$ such that
\[
\mpf_p (f,r) \asymp \we(r),\quad 0\le r <1,
\]
for each $0<p\le \infty$.
\end{theorem}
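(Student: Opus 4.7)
The plan is to realize $f$ as a lacunary series in a family of homogeneous polynomials $\{P_{n_k}\}$ whose $L^p(\sid)$-norms are all equivalent to $1$ uniformly in degree. For $d=1$ the monomials $P_n(z)=z^n$ do the job. For $d\ge 2$ I would invoke the Ryll--Wojtaszczyk theorem, which for each integer $n$ supplies a homogeneous polynomial $P_n$ of degree $n$ on $\cd$ with $\|P_n\|_{L^p(\sid)}\asymp 1$ uniformly in $n$, for every $0<p\le\infty$.

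\textbf{Encoding the weight.} Next, write $\phi(t):=\log \we(e^t)$, which is convex and nondecreasing on $(-\infty,0)$. Pick a sufficiently lacunary sequence of positive integers $n_k$ with $n_{k+1}\ge\lambda n_k$, test points $\rho_k\uparrow 1$ with $\log\rho_k\sim -1/n_k$, and define $a_k>0$ so that the affine function $\ell_k(t):=\log a_k+n_k t$ is tangent to $\phi$ at $t=\log\rho_k$. Convexity of $\phi$ yields $\ell_k\le\phi$ on all of $(-\infty,0)$, and choosing $\lambda$ large enough (relative to the curvature of $\phi$) forces a geometric-decay separation between the dominant tangent and the rest at every $t$. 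Consequently
\[
\sum_k a_k r^{n_k}\;\asymp\;\Bigl(\sum_k a_k^2 r^{2n_k}\Bigr)^{1/2}\;\asymp\;\we(r), \qquad 0\le r<1.
\]

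\textbf{Assembly and integral means.} Set $f(z):=\sum_k a_k P_{n_k}(z)\in\hol(\bd)$. Homogeneous polynomials of distinct degrees are $L^2(\sid)$-orthogonal, so $\mpf_2(f,r)^2\asymp\sum_k a_k^2 r^{2n_k}\asymp \we(r)^2$ by the previous display. For $0<p<\infty$, a lacunary Khinchin--Paley estimate (valid for orthogonal series whose building blocks have bounded $L^p/L^2$-ratio) gives $\mpf_p(f,r)\asymp \mpf_2(f,r)\asymp \we(r)$. For $p=\infty$, the triangle inequality produces the upper bound $\mpf_\infty(f,r)\le\sum_k a_k\|P_{n_k}\|_\infty r^{n_k}\asymp \we(r)$, while the inclusion $\mpf_\infty(f,r)\ge \mpf_2(f,r)\asymp \we(r)$ supplies the matching lower bound. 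The crux---and the main obstacle---is the encoding step: forcing the $\ell^1$- and $\ell^2$-norms of the sequence $(a_k r^{n_k})_k$ to be \emph{simultaneously} equivalent to $\we(r)$. It is precisely log-convexity of $\we$ that makes this simultaneity possible, as an arbitrary nondecreasing weight admits no such representation; once this is granted, the rest of the argument is essentially formal, modulo quoting Ryll--Wojtaszczyk and the lacunary Paley inequality.
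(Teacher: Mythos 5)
Your overall architecture (a gap series $1+\sum_k a_k P_{n_k}$ with $P_n(z)=z^n$ for $d=1$ and Ryll--Wojtaszczyk polynomials for $d\ge 2$; an $L^2$ lower bound from orthogonality; an $L^\infty$ upper bound from the coefficient sum; then descent to $p<2$) is exactly the paper's. The difference is in the step you yourself call the crux, and there you have a genuine gap. The paper does not construct the coefficients by hand: it quotes Lemma~2.2 and Theorem~1.2 of Abakumov--Doubtsov, which deliver sequences $(a_k)$, $(n_k)$ satisfying simultaneously the upper bound $\sum_k a_k r^{n_k}\le C_1\we(r)$ and a \emph{pointwise} lower bound $|g_1(r\za)|+|g_2(r\za)|\ge C_2\we(r)$ for the two halves of the series; squaring and integrating the latter yields $\sum_k a_k^2 r^{2n_k}\ge C_3\we^2(r)$, and together with $\bigl(\sum a_k^2 r^{2n_k}\bigr)^{1/2}\le\sum a_k r^{n_k}$ this gives the simultaneous $\ell^1$/$\ell^2$ equivalence you need. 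Your sketch of how to produce this directly is not correct as stated. First, tangency of $\ell_k(t)=\log a_k+n_k t$ to $\phi$ at $t=\log\rho_k$ with $\log\rho_k\sim -1/n_k$ is incompatible with prescribing the slope to be $n_k$: the supporting slope of $\phi$ at $-1/n_k$ can be anything from $0$ (for $\we\equiv 1$) to far larger than $n_k$ (for rapidly growing $\we$). Second, ``choosing $\lambda$ large relative to the curvature of $\phi$'' is not meaningful for a general convex $\phi$, which has no curvature bound and may be piecewise linear with arbitrarily placed corners. The real tension --- tangent points dense enough that $\max_k\ell_k\ge\phi-O(1)$, yet sparse enough that the full sum (including the terms with $j>k(t)$, which are each individually bounded by $e^{\phi(t)}$ but a priori numerous) stays $O(e^{\phi(t)})$ --- is precisely the content of the cited lemma, and your proposal asserts rather than resolves it.

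Two smaller points. The invocation of a ``lacunary Khinchin--Paley estimate, valid for orthogonal series whose building blocks have bounded $L^p/L^2$-ratio,'' is not a correct general principle (orthogonality plus normalized blocks does not give $L^p$--$L^2$ equivalence of the sums), and for series of Ryll--Wojtaszczyk polynomials it is not needed: once you have $M_2(f,r)\ge c\we(r)$ and the pointwise bound $|f(r\za)|\le C_0\we(r)$, H\"older in the form $M_2^2(f,r)\le M_\infty^{2-p}(f,r)\,M_p^p(f,r)$ (equivalently, the Paley--Zygmund measure estimate the paper uses) gives $M_p(f,r)\ge c_p\we(r)$ for all $0<p<2$, and $M_p\le M_\infty\le C\we$ handles $p\ge 2$. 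Also note the paper adds the constant term $1$ to the series so that the estimates extend from $[r_0,1)$ to all of $[0,1)$; your sketch should do the same. With the encoding step replaced by a citation of (or a complete proof in the spirit of) the Abakumov--Doubtsov lemma, the rest of your argument goes through.
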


Also, we consider volume integral means for $0<q<\infty$.
The logarithmic convexity properties for such integral means have been recently investigated
in a series of papers (see, for example, \cite{WXZ15, WZ14, XZ11}).
Applying Theorem~\ref{t_lp_gen}, we obtain, in particular, the following result.

\begin{cory}\label{c_vol_example}
Let $d\ge 1$, $0<q< \infty$ and let $\we: [0,1)\to (0, +\infty)$ be a weight.
The following properties are equivalent:
\begin{itemize}
  \item[(i)]
  $\we(r)$ is equivalent to a log-convex weight on $[0,1)$;
  \item[(ii)]
  there exists $f\in\hol(\bd)$ such that
  \[
  \left( \frac{1}{\vlm(r\bd)} \int_{r\bd} |f(z)|^q\, d\vlm(z) \right)^{\frac{1}{q}} \asymp \we(r), \quad 0< r <1,
  \]
  where $\vlm$ denotes the normalized volume measure on $\bd$.
\end{itemize}
\end{cory}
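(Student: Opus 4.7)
The plan is to derive Corollary~\ref{c_vol_example} from Theorem~\ref{t_lp_gen} via the polar-coordinate identity
\[
\int_{rB_d}|f(z)|^q\,d\vlm(z) = \int_0^r 2d s^{2d-1}\mpf_q(f,s)^q\,ds, \qquad \vlm(rB_d)=r^{2d},
\]
which presents the volume integral mean of $|f|^q$ as a weighted radial average of $\mpf_q(f,\cdot)^q$. The implication $(\mathrm{ii})\Rightarrow(\mathrm{i})$ is essentially immediate: by \cite{WXZ15,WZ14,XZ11} the volume integral mean is log-convex in $r$, so any weight equivalent to it is equivalent to a log-convex weight.

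For $(\mathrm{i})\Rightarrow(\mathrm{ii})$, I may replace $\we$ by an equivalent log-convex weight and assume $\we$ itself is log-convex. The strategy is to invert the averaging operation. Since $\we^q$ is log-convex and hence locally Lipschitz on compact subsets of $(0,1)$, the derivative $(\we^q)'$ exists almost everywhere, and I set
\[
\we_1(r)^q := \we(r)^q + \frac{r}{2d}(\we^q)'(r).
\]
This is designed precisely so that $\int_0^r 2d s^{2d-1}\we_1(s)^q\,ds = r^{2d}\we(r)^q$ holds for every $r\in(0,1)$, by the fundamental theorem of calculus. Granted that $\we_1$ is equivalent to a log-convex weight, Theorem~\ref{t_lp_gen} furnishes $f\in\hol(\bd)$ with $\mpf_q(f,r)\asymp\we_1(r)$, and the polar-coordinate identity yields
\[
\biggl(\frac{1}{\vlm(rB_d)}\int_{rB_d}|f(z)|^q\,d\vlm(z)\biggr)^{1/q} \asymp \we(r).
\]

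The main obstacle is verifying that $\we_1$ is equivalent to a log-convex weight. Setting $t=\log r$ and $\phi(t)=\log\we(e^t)$ (a convex function with $\phi'\ge 0$ non-decreasing), one computes
\[
\log\we_1(e^t) = \phi(t) + \frac{1}{q}\log\!\Bigl(1 + \frac{q}{2d}\phi'(t)\Bigr).
\]
The first summand is convex in $t$; the second is a concave transform of the non-decreasing function $\phi'$ and need not itself be convex. I would address this by first regularizing $\we$ via a convolution smoothing in the $t$-variable, which preserves convexity and the equivalence class, and then computing the second derivative of the right-hand side directly to show that any negative contribution from the concave term is absorbed by the convexity of $\phi$. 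Passing to the limit in the smoothing parameter then completes the argument.
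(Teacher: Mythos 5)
Your reduction to Theorem~\ref{t_lp_gen} via the polar--coordinate identity is the right idea, and your function $\we_1$ is indeed the exact ``inverse average'' of $\we$. But the step you yourself flag as the main obstacle --- that $\we_1$ is equivalent to a log-convex weight --- is a genuine gap, and the proposed fix does not work. At the pointwise level, writing $\phi(t)=\log \we(e^t)$, the second derivative of $\phi(t)+\tfrac1q\log\bigl(1+\tfrac{q}{2d}\phi'(t)\bigr)$ contains the term $\phi'''/(1+\tfrac{q}{2d}\phi')$, and convexity of $\phi$ gives no lower bound on $\phi'''$; so ``absorption by the convexity of $\phi$'' fails. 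Worse, $\we_1$ need not even be \emph{equivalent} to a log-convex weight: take $\phi$ piecewise linear (suitably smoothed) with breakpoints $t_j\uparrow 0$ and slopes $s_j$ growing so fast that $J_j=\log\frac{1+cs_{j+1}}{1+cs_j}\to\infty$. Then $\log\we_1(e^t)=q\phi(t)+\log(1+c\phi'(t))$ is, between breakpoints, an affine function shifted up by the step function $\log(1+c\phi')$, and a comparison with its greatest convex minorant shows the gap at the midpoint of $(t_j,t_{j+1})$ is at least $J_j/2$; hence it is not within bounded distance of any convex function. There is no contradiction with the corollary because two non-equivalent densities can have equivalent averages. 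The paper's way around this is to differentiate a \emph{good representative} rather than $\we$ itself: Theorem~\ref{t_lp_gen} with $p=2$ gives $\we^q(t)\asymp\sum_k a_k t^k$ with $a_k\ge 0$, and the inverse average of $\sum a_k t^k$ is $\frac1{2d}\sum(k+2d)a_kt^k$, which is automatically a log-convex weight because its Taylor coefficients are nonnegative. The essential point your argument misses is that equivalence of weights is not inherited by derivatives, so one must fix the representative before differentiating; the power-series representation makes the differentiated object log-convex for free.

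There is a second, smaller gap in your treatment of (ii)$\Rightarrow$(i). Log-convexity of $\vmean_q(f,r)$ is available from \cite{WXZ15} only for $d=1$, and from Taylor's Banach-space argument only for $q\ge 1$; for $d\ge 2$ and $0<q<1$ it is not covered by the cited literature, and the paper instead shows that $\vmean_q(f,r)$ is \emph{equivalent} to a log-convex weight by the same device: represent $\mpf_q^q(f,t)\asymp\sum a_kt^k$ with $a_k\ge0$ and observe that $\vmean_q^q(f,r)\asymp\sum\frac{a_k}{k+2d}r^k$, again a power series with nonnegative coefficients. You should either restrict your citation to the cases it actually covers or supply this extra argument.
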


\subsection*{Organization of the paper}
Section~\ref{s_prf_thmLp} is devoted to the proof of Theorem~\ref{t_lp_gen}.
Corollary~\ref{c_vol_example} and other results related to volume integral means
are obtained in Section~\ref{s_volume}.

\section{Proof of Theorem~\ref{t_lp_gen}}\label{s_prf_thmLp}

Put $\Dbb = B_1$ and $\Tbb = \partial \Dbb$.
For a log-convex weight $\we$ on $[0,1)$,
Theorem~1.2 from \cite{AD15} provides functions $f_1, f_2\in\hol(\Dbb)$
such that $|f_1(z)| + |f_2(z)|\asymp \we(|z|)$, $z\in\Dbb$.
These functions are almost sufficient for a proof of Theorem~\ref{t_lp_gen} with $d=1$.
However, we will need additional technical information contained in \cite{AD15}.
Namely, applying Lemma~2.2 from \cite{AD15} and arguing as in the proof of Theorem~1.2 from \cite{AD15},
we obtain the following lemma.

\begin{lemma}\label{l_blms}
Let $\we$ be a log-convex weight on $[0,1)$.
There exist $a_k>0$, $n_k\in\Nbb$, $k=1,2,\dots$,
and constants $r_0\in (\frac{9}{10}, 1)$, $C_1, C_2 >0$
with the following properties:
\begin{align}
  n_k &< n_{k+1},\quad k=1,2,\dots;
  \label{e_blms_nk} \\
  \sum_{k=1}^\infty a_k r^{n_k} &\le C_1 \we(r),\quad r_0 \le r <1;
  \label{e_blms_up} \\
  |g_1(r\za)| + |g_2(r\za)| &\ge C_2 \we(r),\quad r_0 \le r <1,\ \za\in\Tbb;
  \label{e_blms_low}
\end{align}
where
\[
g_1(z) = \sum_{j=1}^\infty a_{2j-1} z^{n_{2j-1}}, \quad
g_2(z) = \sum_{j=1}^\infty a_{2j} z^{n_{2j}}, \quad z\in \Dbb.
\]
\end{lemma}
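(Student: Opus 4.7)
My plan is to follow the construction behind Theorem~1.2 of \cite{AD15} and inspect what the proof yields beyond its stated conclusion. The starting point is to choose the exponents $n_k$ and coefficients $a_k$ via the log-convexity of $\we$: writing $\log \we$ as a convex function of $\log r$, I would let $\{r_k\}\subset (r_0,1)$ tend to $1$ and, for each $k$, pick $n_k\in\Nbb$ as the (integer) slope of a supporting line of $\log\we(e^t)$ at $t=\log r_k$, then pick $a_k$ so that the monomial $a_k r^{n_k}$ touches $\we(r)$ from below at $r=r_k$. Choosing $r_k$ sparse enough (so that the slopes $n_k$ satisfy a Hadamard-type gap condition) produces a strictly increasing sequence with $a_k r^{n_k}\le \we(r)$ everywhere and $a_k r_k^{n_k}\asymp \we(r_k)$.

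Given this, the upper bound \eqref{e_blms_up} is routine: log-convexity of $\we$ forces each term $a_k r^{n_k}$ to decay geometrically away from $r=r_k$, so the series $\sum_k a_k r^{n_k}$ is dominated, term by term, by a convergent geometric series whose sum is comparable to $\we(r)$. The gap condition on $n_k$ controls the ratio in this geometric bound, and one absorbs the resulting factor into $C_1$.

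The interesting part is the pointwise lower bound \eqref{e_blms_low}, where I need both $g_1$ and $g_2$ rather than a single lacunary sum. Here I would invoke Lemma~2.2 of \cite{AD15}: in the AD15 proof of Theorem~1.2, that lemma produces a splitting of the index set into two disjoint subsequences so that on each annulus close to the unit circle, at least one of the two corresponding partial series has modulus of the correct order of $\we(r)$, uniformly in the argument $\za\in\Tbb$. Taking that splitting to be exactly the odd/even partition $\{n_{2j-1}\}$ and $\{n_{2j}\}$ (a relabelling allowed by the freedom in choosing $\{r_k\}$) yields $g_1,g_2$ satisfying \eqref{e_blms_low}. The point is that a single lacunary-type gap series can vanish on $\za$-dependent subsets of each circle, but two sufficiently ``spread'' subseries cannot vanish simultaneously.

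The main obstacle I expect is precisely the last step: verifying that the odd/even splitting of $\{n_k\}$ still satisfies the hypotheses under which Lemma~2.2 of \cite{AD15} guarantees the two-function lower bound. This reduces to checking that, after the split, each subsequence still enjoys a sufficient gap at each scale and that the coefficients $a_{2j-1}$, $a_{2j}$ individually reconstruct $\we(r_{2j-1})$, $\we(r_{2j})$ up to a constant; both follow once the initial radii $\{r_k\}$ are chosen sparsely enough that removing every other term does not destroy the saturation property of the construction.
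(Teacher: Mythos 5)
Your proposal is correct and follows essentially the same route as the paper, which proves this lemma only by reference: it applies Lemma~2.2 of \cite{AD15} and repeats the argument from the proof of Theorem~1.2 there, i.e.\ exactly the supporting-monomial construction and odd/even splitting you describe. Your closing paragraph correctly isolates the one delicate point (balancing the density of the touching radii against the gap condition needed for single-term domination within each parity class), which is precisely what Lemma~2.2 of \cite{AD15} is designed to handle.
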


\begin{proof}[Proof of Theorem~\ref{t_lp_gen}]
We are given a log-convex weight $\we$ on $[0,1)$.
First, assume that $d=1$.
Let $a_k$ and $n_k$, $k=1,2,\dots$, $g_1$ and $g_2$ be those provided by Lemma~\ref{l_blms}.
By \eqref{e_blms_low},
\[
|g_1(r\za)|^2 + |g_2(r\za)|^2 \ge C_3 \we^2(r),\quad r_0 \le r <1,\ \za\in\Tbb.
\]
Using \eqref{e_blms_nk} and integrating the above inequality with respect to Lebesgue measure $\sigma_1$ on $\Tbb$,
we obtain
\[
\sum_{k=1}^\infty a_k^2 r^{2n_k} \ge C_3 \we^2(r),\quad r_0\le r <1.
\]
Therefore,
\[
1+ \sum_{k=1}^\infty a_k^2 r^{2n_k} \ge C_4 \we^2(r),\quad 0\le r <1.
\]
So, by \eqref{e_blms_nk}, we have
\begin{equation}\label{e_disk_m2_low}
M_2(f,r) \ge \we(r), \quad 0\le r <1,
\end{equation}
where
\[
\sqrt{C_4} f(z) = 1+ \sum_{k=1}^\infty a_k z^{n_k}, \quad z\in\Dbb.
\]
Also, \eqref{e_blms_up} guarantees that
\begin{equation}\label{e_disk_up}
|f(r\za)| \le C_0 \we(r), \quad 0\le r <1,\ \za\in\Tbb.
\end{equation}
Hence, $M_2(f,r)\le M_\infty(f,r) \le C\we(r)$, $0\le r <1$.
Combining these estimates and \eqref{e_disk_m2_low}, we conclude that
$M_2(f,r)\asymp M_\infty(f,r) \asymp \we(r)$.
Thus, $\mpf_p(f,r) \asymp \we(r)$, $0\le r <1$, for any $2\le p \le \infty$.

Also, we claim that $\mpf_p(f,r) \asymp \we(r)$ for any $0 < p < 2$.
Indeed,
\eqref{e_disk_m2_low} and \eqref{e_disk_up} guarantee that
\[
\sigma_1 \left\{\za\in\Tbb: |f(r\za)|\ge \frac{\we(r)}{2}\right\} \ge \frac{1}{2 C_0^2}.
\]
Therefore,
$M_\infty(f,r) \ge \mpf_p(f,r) \ge C_p\we(r)$, $0\le r <1$.
So, the proof of the theorem is finished for $d=1$.

Now, assume that $d\ge 2$.
Let $W_k$, $k=1,2,\dots$, be a Ryll--Wojtaszczyk sequence (see \cite{RW83}).
By definition, $W_k$ is a holomorphic homogeneous polynomial of degree $k$,
$\|W_k\|_{L^\infty(\spd)} =1$ and $\|W_k\|_{L^2(\spd)} \ge \de$ for a constant $\de>0$ which does not depend on $k$.
Put
\[
F(z) = 1+ \sum_{k=1}^\infty a_k W_k(z), \quad z\in \bd.
\]
Clearly, \eqref{e_blms_up} guarantees that
$|F(r\za)| \le C \we(r)$, $0\le r <1$, $\za\in\spd$.
Also, the polynomials $W_k$, $k=1,2,\dots$, are mutually orthogonal in $L^2(\spd)$; hence,
$M_2(F, r) \ge C(\de) \we(r)$, $0\le r<1$.
So, arguing as in the case $d=1$, we conclude that $\mpf_p(F, r) \asymp \we(r)$ for any $0<p\le \infty$, as required.
\end{proof}

As indicated in the introduction, for any $f\in \hol(\bd)$,
the function $\mpf_p(f, r)$ is log-convex;
hence, Theorem~\ref{t_lp_gen} implies the following analog of Corollary~\ref{c_vol_example}.

\begin{cory}\label{c_means}
Let $d\ge 1$, $0<p\le \infty$ and let $\we: [0,1)\to (0, +\infty)$ be a weight.
The following properties are equivalent:
\begin{itemize}
  \item[(i)]
  $\we(r)$ is equivalent to a log-convex weight on $[0,1)$;
  \item[(ii)]
  there exists $f\in\hol(\bd)$ such that
  \[
  \mpf_p(f,r) \asymp \we(r), \quad 0\le r <1.
  \]
\end{itemize}
\end{cory}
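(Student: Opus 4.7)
The plan is to derive Corollary~\ref{c_means} as an immediate combination of Theorem~\ref{t_lp_gen} with the Hardy convexity theorem recalled in the introduction; no new analytic machinery should be needed.

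For the implication (i) $\Rightarrow$ (ii), I would argue as follows. Suppose $\we$ is equivalent to some log-convex weight $\twe$ on $[0,1)$. Applying Theorem~\ref{t_lp_gen} to $\twe$ in place of $\we$ produces $f\in\hol(\bd)$ with $\mpf_p(f,r)\asymp \twe(r)$ for every $0<p\le\infty$, and in particular for the given $p$. Composing this with the equivalence $\twe\asymp\we$ yields $\mpf_p(f,r)\asymp\we(r)$ on $[0,1)$, which is exactly (ii).

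For the converse (ii) $\Rightarrow$ (i), fix $f\in\hol(\bd)$ with $\mpf_p(f,r)\asymp\we(r)$. Since $\we$ takes values in $(0,+\infty)$, evaluating at $r=0$ gives $|f(0)|=\mpf_p(f,0)>0$, so the Hardy convexity statement recalled in the introduction applies: $r\mapsto\mpf_p(f,r)$ is a log-convex weight on $[0,1)$ (continuity and monotonicity of $\mpf_p(f,\cdot)$ together with log-convexity of $\log \mpf_p(f,e^t)$ are standard). Thus $\we$ is equivalent to the explicit log-convex weight $\mpf_p(f,\cdot)$, which is (i).

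The only potentially delicate point is the verification that $\mpf_p(f,r)$ genuinely satisfies the definition of a weight used here (continuous, non-decreasing, strictly positive on $[0,1)$); but this is covered by the Hardy convexity theorem together with $f(0)\ne 0$, so there is no real obstacle. Essentially the corollary is a bookkeeping consequence of Theorem~\ref{t_lp_gen}, pairing the \emph{existence} half already proved with the \emph{necessity} half furnished by classical Hardy convexity.
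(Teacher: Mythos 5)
Your proposal is correct and follows exactly the paper's route: the paper dispatches this corollary in one sentence by combining Theorem~\ref{t_lp_gen} (for the existence direction) with the Hardy convexity theorem recalled in the introduction (for the necessity direction). Your additional check that $\mpf_p(f,0)=|f(0)|>0$ follows from $\we>0$, so that the log-convexity statement for $f(0)\neq 0$ applies, is a worthwhile detail the paper leaves implicit.
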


\section{Volume integral means}\label{s_volume}
In this section, we consider integral means based on volume integrals.
Recall that $\vlm$ denotes the normalized volume measure on the unit ball $\bd$.
For $f\in \hol(\bd)$, $0<q<\infty$ and
a continuous function $u: [0,1) \to (0, +\infty)$, define
\begin{align*}
  \mpf_{q,u} (f, r)
    &= \left( \frac{1}{r^{2d}} \int_{r\bd} |f(z)|^q u(|z|) \, d\vlm(z)\right)^{\frac{1}{q}}, \quad 0< r <1; \\
  \mpf_{q,u} (f, 0) &= |f(0)| u^{\frac{1}{q}}(0).
\end{align*}

\begin{prop}\label{p_volume}
  Let $0<q<\infty$ and let $\wgt, \we: [0,1)\to (0, +\infty)$ be log-convex weights.
There exists $f\in \hol(\bd)$ such that
\[
 \mpf_{q,\frac{1}{\wgt}} (f,r) \asymp \we(r), \quad 0\le r <1.
\]
\end{prop}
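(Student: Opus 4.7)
The plan is to reduce the proposition to Theorem \ref{t_lp_gen}. Writing $\vlm$ in polar coordinates, for any $f \in \hol(\bd)$ and $0 < r < 1$,
$$\mpf_{q, 1/\wgt}^q(f, r) = \frac{2d}{r^{2d}} \int_0^r s^{2d-1} \frac{M_q^q(f, s)}{\wgt(s)} \, ds,$$
so if $f$ is obtained from Theorem \ref{t_lp_gen} applied to some log-convex weight $\Phi$, we have $\mpf_{q, 1/\wgt}^q(f, r) \asymp r^{-2d} \int_0^r s^{2d-1} \Phi^q(s)/\wgt(s)\, ds$. Seeking $\Phi = \wgt^{1/q} \Theta$ with $\Theta$ log-convex (so that $\Phi$, as a product of log-convex weights, is log-convex), the task reduces to finding a log-convex weight $\Theta$ satisfying
$$\frac{1}{r^{2d}} \int_0^r s^{2d-1} \Theta^q(s)\, ds \asymp \we^q(r).$$

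A natural candidate, motivated by the fundamental theorem of calculus, is $\Theta^q(s) := 2d\, \we^q(s) + s(\we^q)'(s) = \we^q(s)[2d + qW'(\log s)]$, where $W(t) := \log \we(e^t)$ is convex with $W' \ge 0$. By construction, $\int_0^r s^{2d-1} \Theta^q(s)\, ds = r^{2d}\we^q(r)$ exactly. The factor $\we^q$ is log-convex and $2d + qW'$ is non-decreasing, but the main obstacle is that $\Theta$ itself need not be log-convex: one has $\log\Theta^q(e^t) = qW(t) + \log[2d + qW'(t)]$, which is convex in $t$ only when $\log[2d + qW'(t)]$ is, and this may fail for general convex $W$.

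To overcome this, I would first replace $\we$ by an equivalent log-convex weight $\twe$ obtained by regularizing $W$ (by a smoothing in the variable $t = \log r$) so that $\widetilde{W}'$ is log-convex, i.e., $\log \widetilde{W}'$ is convex. A direct second-derivative calculation shows that $f(\xi) := \log(2d + qe^\xi)$ is convex and increasing in $\xi$, so $\log[2d + q\widetilde{W}'(t)] = f(\log \widetilde{W}'(t))$ is convex in $t$ whenever $\log \widetilde{W}'$ is. Then $\widetilde{\Theta}^q = \twe^q \cdot [2d + q\widetilde{W}'(\log s)]$ is log-convex, and applying Theorem \ref{t_lp_gen} to $\Phi := \wgt^{1/q}\widetilde{\Theta}$ yields $f \in \hol(\bd)$ with $M_q(f, s) \asymp \Phi(s)$ and, by the fundamental theorem of calculus, $\mpf_{q, 1/\wgt}^q(f, r) \asymp \twe^q(r) \asymp \we^q(r)$. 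The principal technical work lies in the regularization producing $\twe$ with the required log-convex derivative; as a conceptual sanity check, H\"older's inequality applied to the integral representation $r^{-2d}\int_0^r s^{2d-1}\psi(s)\,ds = \int_{-\infty}^0 e^{2dv}\psi(e^{t+v})\,dv$ shows that this averaging operator always maps log-convex weights to log-convex weights, confirming that the target $\we^q$ indeed lies in its image on log-convex inputs.
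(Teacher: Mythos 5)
Your reduction is the same as the paper's: pass to polar coordinates, and look for a log-convex weight $\Phi=\wgt^{1/q}\Theta$ to feed into Theorem~\ref{t_lp_gen} so that $r^{-2d}\int_0^r s^{2d-1}\Theta^q(s)\,ds\asymp\we^q(r)$, with $\Theta^q$ essentially the ``derivative-corrected'' weight $2d\,\we^q+s(\we^q)'$. But the entire difficulty of the proposition is concentrated in showing that such a $\Theta$ can be taken log-convex, and that is exactly the step you leave open. Your proposed sufficient condition --- regularize $\we$ to $\twe$ so that $\log\widetilde W'$ is convex, where $\widetilde W(t)=\log\twe(e^t)$ --- is strictly stronger than what is needed and already fails for very tame log-convex weights: for $\twe(r)=1+r$ one gets $\widetilde W'(t)=e^t/(1+e^t)$, whose logarithm is concave, not convex. (Yet $2d(1+s)+s$ is of course log-convex.) So the required regularization is not a routine mollification of a convex function; it would have to produce, for an arbitrary log-convex $\we$ with possibly long flat stretches and abrupt growth spurts, an equivalent weight whose derivative in the $\log r$ variable is itself log-convex, with uniform control of the accumulated error. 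You acknowledge this is ``the principal technical work,'' but without it the argument is incomplete, and it is not clear that this particular route can be made to work as stated.

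The paper sidesteps the issue with a short device that is worth internalizing: apply Theorem~\ref{t_lp_gen} with $p=2$ to get $\we^q(t)\asymp\sum_{k\ge0}a_kt^k$ with $a_k\ge0$ (the squared $M_2$-mean of the constructed function is automatically a power series with nonnegative coefficients), and then define $\varphi^q(t)=\sum_k(k+2d)a_kt^k=2dP(t)+tP'(t)$. Multiplying nonnegative coefficients by $k+2d$ manifestly preserves nonnegativity, and any power series with nonnegative coefficients is log-convex (Cauchy--Schwarz on the coefficients), so $\varphi$ and hence $\varphi\,\wgt^{1/q}$ are log-convex weights with no smoothness hypotheses whatsoever. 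In other words, the ``regularization'' you are missing is precisely the replacement of $\we^q$ by an equivalent nonnegative power series, for which the operation $P\mapsto 2dP+tP'$ is trivially log-convexity-preserving; Theorem~\ref{t_lp_gen} itself supplies that replacement for free. Your closing H\"older observation (that the averaging operator maps log-convex weights to log-convex weights) is correct but only confirms consistency; it does not produce the preimage.
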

\begin{proof}
  By Theorem~\ref{t_lp_gen} with $p=2$,
  there exist $a_k\ge 0$, $k=0,1,\dots$, such that
  \[
  \we^q(t) \asymp \sum_{k=0}^\infty a_k t^k, \quad 0\le t <1.
  \]
Let
\[
\varphi^q(t) = \sum_{k=0}^\infty (k+2d) a_k t^k, \quad 0\le t <1.
\]
The functions $\varphi^q(t)$ and $\varphi(t)$
are correctly defined log-convex weights on $[0,1)$.
Hence, $\varphi(t) \wgt^{\frac{1}{q}}(t)$ is a log-convex weight as the product of two log-convex weights.
By Theorem~\ref{t_lp_gen}, there exists $f\in \hol(\bd)$ such that
\[
\int_{\spd} |f(t\za)|^q\, d\sid(\za)
\asymp \varphi^q(t) \wgt(t), \quad 0\le t <1,
\]
or, equivalently,
\[
\frac{t^{2d-1}}{\wgt(t)} \int_{\spd} |f(t\za)|^q\, d\sid(\za)
\asymp \sum_{k=0}^\infty (k+2d) a_k t^{k+2d-1}, \quad 0\le t <1.
\]
Representing $\mpf^q_{q,\frac{1}{\wgt}} (f,r)$ in polar coordinates and
integrating the above estimates with respect to $t$, we obtain
\begin{align*}
   \mpf^q_{q,\frac{1}{\wgt}} (f,r) &= \frac{2d}{r^{2d}} \int_0^r \int_{\spd} |f(t\za)|^q\, d\sid(\za) \, \frac{t^{2d-1}}{\wgt(t)} dt \\
  &\asymp \sum_{k=0}^\infty a_k r^k, \\
  &\asymp \we^q(r), \quad 0\le r <1,
\end{align*}
as required.
\end{proof}

Clearly, Proposition~\ref{p_volume} is of special interest if
$\mpf_{q,\frac{1}{u}} (f, r)$ is log-convex or equivalent to a log-convex function for any $f\in\hol(\bd)$.
Also, we have to prove Corollary~\ref{c_vol_example}.
So, assume that $u\equiv 1$ and define
\begin{align*}
  \vmean_q(f,r) &= \left( \frac{1}{\vlm(r\bd)} \int_{r\bd} |f(z)|^q\, d\vlm(z) \right)^{\frac{1}{q}}, \quad 0<r<1, \\
  \vmean_q(f,0) &= |f(0)|,
\end{align*}
where $0< q < \infty$.

\begin{proof}[Proof of Corollary~\ref{c_vol_example}]
By Proposition~\ref{p_volume}, (i) implies (ii).
To prove the reverse implication, assume that $\we(t)$ is a weight on $[0,1)$
and $\we(r) \asymp \vmean_q(f,r)$ for some $f\in\hol(\bd)$, $f(0)\neq 0$.

If $d=1$ and $0<q< \infty$, then $\vmean_q(f,r)$ is log-convex by Theorem~1 from \cite{WXZ15}.
So, (ii) implies (i) for $d=1$.
The function $\vmean_q(f,r)$ is also log-convex if $1\le q <\infty$ and $d\ge 2$.
Indeed, we have
\[
\vmean_q(f,r) = \left( \int_{\bd} |f(rz)|^q\, d\vlm(z) \right)^{\frac{1}{q}}, \quad 0\le r <1.
\]
Thus, Taylor's Banach space method applies (see \cite[Theorem~3.3]{T50}).

Now, assume that $d\ge 2$ and $0<q<1$.
The function $\mpf^q_q(f, t)$ is a log-convex weight.
Hence, by Theorem~\ref{t_lp_gen} with $p=2$, there exist $a_k\ge 0$, $k=0,1, \dots$, such that
\[
\mpf^q_q(f,t) \asymp \sum_{k=0}^\infty a_k t^k, \quad 0\le t <1.
\]
Thus,
\begin{align*}
  \vmean^q_q(f,r) &= \frac{2d}{r^{2d}} \int_0^r \mpf^q_q(f,t) t^{2d-1}\, dt\\
  &\asymp \sum_{k=0}^\infty \frac{a_k}{k+2d} r^k, \quad 0\le r <1.
\end{align*}
In other words, $\vmean_q(f,r)$ is equivalent to a log-convex weight on $[0,1)$.
So, (ii) implies (i) for all $d\ge 1$ and $0<q<\infty$.
The proof of the corollary is finished.
\end{proof}

For $\alpha>0$, Proposition~\ref{p_volume} also applies to the following integral means:
\[
\frac{1}{r^{2d}} \int_{r\bd} |f(z)|^p (1-|z|^2)^{\alpha} \, d\vlm(z), \quad 0\le r <1.
\]
However, in general, the above integral means are not log-convex.

\bibliographystyle{amsplain}

\end{document}